\renewenvironment{proof}[1][\proofname]{%
   \par\pushQED{\qed}\normalfont%
   \topsep6\p@\@plus6\p@\relax
   \trivlist\item[\hskip\labelsep\bfseries#1\@addpunct{.}]%
   \ignorespaces
}{%
   \popQED\endtrivlist\@endpefalse
}
\newtheorem{lemma}{Lemma}
\newtheorem{theorem}{Theorem}[section]
\newtheorem{ex}{Example}
\title{A new formula for rotation number}
\author{Damián Wesenberg}
\date{October 3, 2020}
\begin{document}

\maketitle

\begin{abstract}
We give a new formula for the rotation number (or Whitney index) of a smooth closed plane curve. This formula is obtained from the winding numbers associated with the regions and the crossing points of the curve. One difference with the classic Whitney formula is that ours does not need a base point.
\end{abstract}

\section{Introduction}
    Informally, the rotation number $w(\gamma)$ of a regular closed planar curve $\gamma$ is just the number of complete turns the tangent vector to the curve makes as one passes once around the curve; and the winding number $wind(\gamma, p)$ of a closed curve $\gamma$ with respect to a point $p$ is the number of times the curve winds around the point (see Figure \ref{Intro}).
    In \cite{whitney1937regular} Whitney showed that the rotation number is invariant under regular homotopy. Moreover, again by Whitney \cite{whitney1937regular}, there is a simple formula for the rotation number of normal planar curves in terms of the number of positive crossings and negative crossings with respect to a base point.
    
\begin{figure}[ht]
\centering
\includegraphics[scale=0.8]{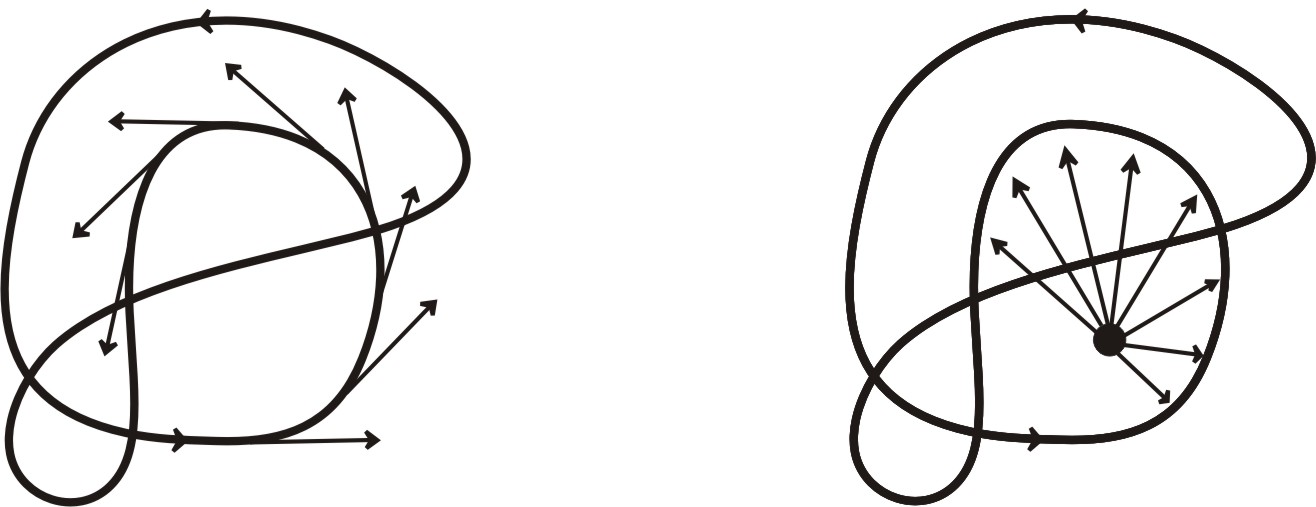}
\caption{$w(\gamma)=1$ and $wind(\gamma,p)=1$.}
\label{Intro}
\end{figure}

In the sections 2, 3 and 4 of this article we give the basic ideas about rotation number and winding numbers, and in section 5 we give our new formula for calculating the rotation number of a closed curve as a function of the winding numbers (Theorem \ref{mio}).

\section{Winding numbers.}

We will say that a curve $\gamma$ is generic (or normal) if $\gamma$ has a finite number of crossing points and they are transverse double points. If $\gamma:[0,1]\longrightarrow{\mathbb{C}}$ is a closed generic curve, and $p$ is not in the image of $\gamma $; then we may find differentiable functions, $r(t)>0$ and $\theta(t)\in\mathbb{R}$ such that

$$\gamma(t)=p+r(t)e^{i\theta(t)}$$

The function $\theta(t)$ is unique up to an additive constant, $2\pi k$; for some $k\in\mathbb{Z}$ and therefore $\Delta \theta :=\theta (1)-\theta (0)$ is well defined independent of the choice of $\theta $. Moreover, since $\gamma$ is a closed curve so that $\gamma (1)=\gamma (0)$; we have that $\theta(1)-\theta(0)=2\pi m$ for some $m\in\mathbb{Z}$.

So we can give the following definition:%

$$wind(\gamma,p):=\frac{\theta(1)-\theta(0)}{2\pi}$$

We can easily see that the function $wind(\gamma,p)$ is constant as a function of $p$ in each connected component of $\mathbb{C}-Im(\gamma )$. If $r$ is a region of the curve $\gamma $, we define $%
wind(r):=wind(\gamma ,p)$ where $p$ is any point in the region $r$.
Intuitively, $wind(r)$ is the number of times that $\gamma $ winds
counterclockwise around any point in region $r$. The interger numbers given by the following lemma (see \cite{alexander1928topological}) are the numbers $wind(r)$.

\begin{lemma}
Given a closed oriented normal curve $\gamma $, one can associate
integers to each of the regions such that at each segment of $\gamma $ the number to the left of $\gamma $ is $1$ greater than the number to the right, and the outside region is numbered zero. Moreover, such a numbering is unique.
\end{lemma}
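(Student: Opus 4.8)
The plan is to prove existence by taking the labelling to be the winding number itself, and uniqueness via a connectedness argument for the dual graph of the complement of $\gamma$. For \emph{existence}, assign to each region $r$ the integer $wind(r):=wind(\gamma,p)$ for any $p\in r$, which is well defined by the remarks preceding the lemma. The outside region gets the label $0$: since $Im(\gamma)$ is compact there is an $R$ with $|\gamma(t)|\le R$, and for $|p|$ large we have $\gamma(t)-p=-p\big(1-\gamma(t)/p\big)$ with $|\gamma(t)/p|\le R/|p|$ uniformly small, so some continuous branch of $\theta(t)$ stays in an arbitrarily short interval and $\Delta\theta=0$; as the outside region is connected, the label is $0$ throughout it. For the segment condition, fix an interior point $z_0=\gamma(t_0)$ of a segment; since $z_0$ is not a crossing, $\gamma^{-1}(z_0)=\{t_0\}$, so by compactness $\gamma$ stays at positive distance from $z_0$ off a short parameter-arc $J$ about $t_0$. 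After rotating the plane so that $\gamma'(t_0)$ is positive real, put $p_\delta=z_0-i\delta$ and $q_\delta=z_0+i\delta$ for small $\delta>0$; these lie in the regions to the right and to the left of the segment, and
\[
wind(\gamma,q_\delta)-wind(\gamma,p_\delta)=\frac{1}{2\pi}\,\Delta\arg\frac{\gamma(t)-q_\delta}{\gamma(t)-p_\delta}
\]
over one period. Off $J$ the ratio tends uniformly to $1$ as $\delta\to0$, contributing nothing; on $J$, a first-order Taylor expansion of $\gamma$ at $t_0$ reduces the count to the elementary change of argument of $\frac{u-i\delta}{u+i\delta}=e^{-2i\arg(u+i\delta)}$ as $u$ runs along the real axis from a negative value through $0$ to a positive one, namely $+2\pi$. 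Since $wind(\gamma,q_\delta)-wind(\gamma,p_\delta)$ is an integer that does not change under small movements of $p_\delta,q_\delta$ inside their regions, it equals its limit $\tfrac{1}{2\pi}\cdot 2\pi=1$; that is, the label to the left exceeds the label to the right by $1$.

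For \emph{uniqueness}, let $f,g$ be two labellings with the stated property and set $h=f-g$. Across any segment both $f$ and $g$ increase by $1$ from right to left, so $h$ takes equal values on the two regions adjacent along it. Given arbitrary regions $r_1,r_2$, pick $p_i\in r_i$ and join $p_1$ to $p_2$ by a path $\sigma$ in $\mathbb{C}$; by general position we may take $\sigma$ to miss the finitely many crossing points of $\gamma$ and to be transverse to $\gamma$ elsewhere, so $\sigma$ meets $\gamma$ in finitely many points, each interior to a segment, and the regions it passes through in order form an edge-path from $r_1$ to $r_2$ in the dual graph. Hence $h$ is constant on all regions, and since $h=0$ on the outside region, $h\equiv0$, i.e. $f=g$.

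The crux is the local jump computation in the existence part: one has to verify carefully that, on shrinking $J$ and the neighbourhood of $z_0$, the contribution of $J$ to $\Delta\arg\frac{\gamma(t)-q_\delta}{\gamma(t)-p_\delta}$ really converges to $2\pi$ — so that the integer $wind(\gamma,q_\delta)-wind(\gamma,p_\delta)$ settles at $1$ and not some other value — while the contribution from the rest of the curve is negligible. The remaining ingredients (vanishing on the outside region, and the general-position/connectedness argument) are routine. One could instead quote the standard fact from degree theory that the winding number jumps by $\pm1$ as the base point crosses $\gamma$ transversally, with $+1$ toward the left side; keeping the argument explicit seems more in the spirit of the paper.
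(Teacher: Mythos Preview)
The paper does not actually prove this lemma: it is stated with a citation to Alexander and the surrounding text simply identifies the promised integers with the winding numbers $wind(r)$ already introduced. So there is no proof in the paper to compare against, and your write-up is in fact more than the paper supplies.

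Your argument is sound. For existence you assign $wind(r)$ and check the two conditions; the vanishing on the unbounded region is immediate, and the jump-by-$1$ computation via $\Delta\arg\frac{\gamma(t)-q_\delta}{\gamma(t)-p_\delta}$ is correct in outline (the only point needing care, as you note yourself, is managing the two small parameters~--- the arc $J$ and the offset $\delta$~--- so that the on-$J$ contribution tends to $2\pi$ while the off-$J$ contribution tends to $0$; taking $\delta$ small relative to a fixed short $J$ does this). The uniqueness argument via the difference $h=f-g$ being locally constant across segments and the complement being connected in the dual sense is standard and correct.

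One remark on economy: the paper, immediately after stating the lemma, recalls the ray definition of the winding number (signed count of crossings of a generic ray from $p$ to infinity). With that description the jump condition is a one-line observation rather than an analytic estimate: for $p$ and $q$ close together on opposite sides of a segment, choose parallel rays avoiding the crossing points; the two rays meet $\gamma$ in the same signed pattern except that exactly one of them picks up the adjacent segment once, and the sign of that extra crossing is $+1$ for the point on the left. This bypasses the $\Delta\arg$ limit entirely and may be closer to what the paper has in mind when it asserts (without proof) that the Alexander numbering coincides with $wind(r)$.
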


\begin{ex}

In Figure \ref{Winding} we can see the winding numbers associated with the regions of a curve.

\begin{figure}[ht]
\centering
\includegraphics[scale=0.15]{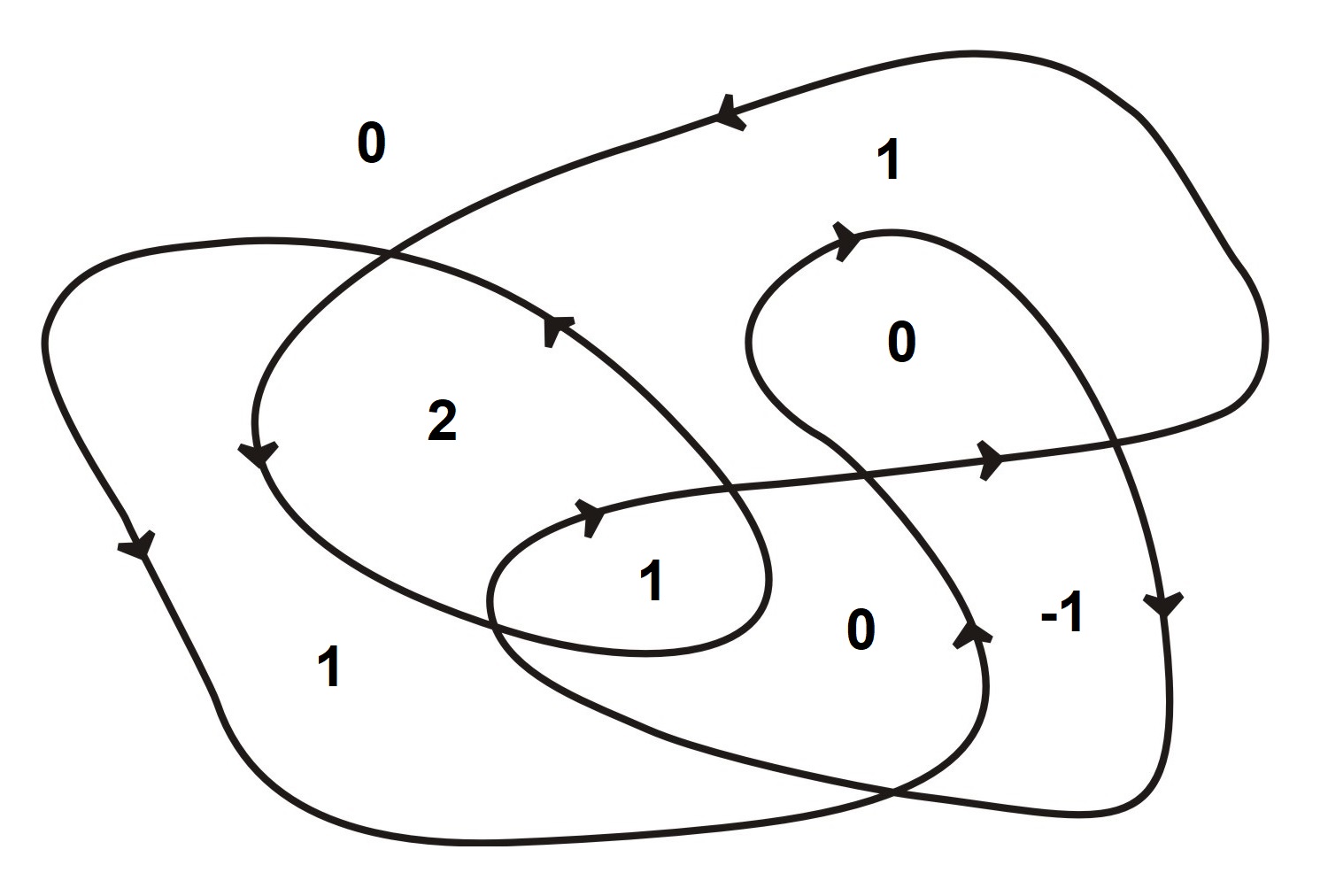}
\caption{The winding numbers of a closed curve.}
\label{Winding}
\end{figure}

\end{ex}

Another way to define the winding numbers is as follows: let $%
\rho $ be any ray from $p$ to infinity that intersects $\gamma $
transversely. The winding number of $\gamma $ around $p$ is the number of times $\gamma $ crosses $\rho $ from right to left, minus the number of times $\gamma $ crosses $\rho $ from left to right. The winding number does not depend on the particular choice of ray $\rho $ (see Figure \ref{ExampleRay}).

\begin{figure}[ht]
\centering
\includegraphics[scale=0.5]{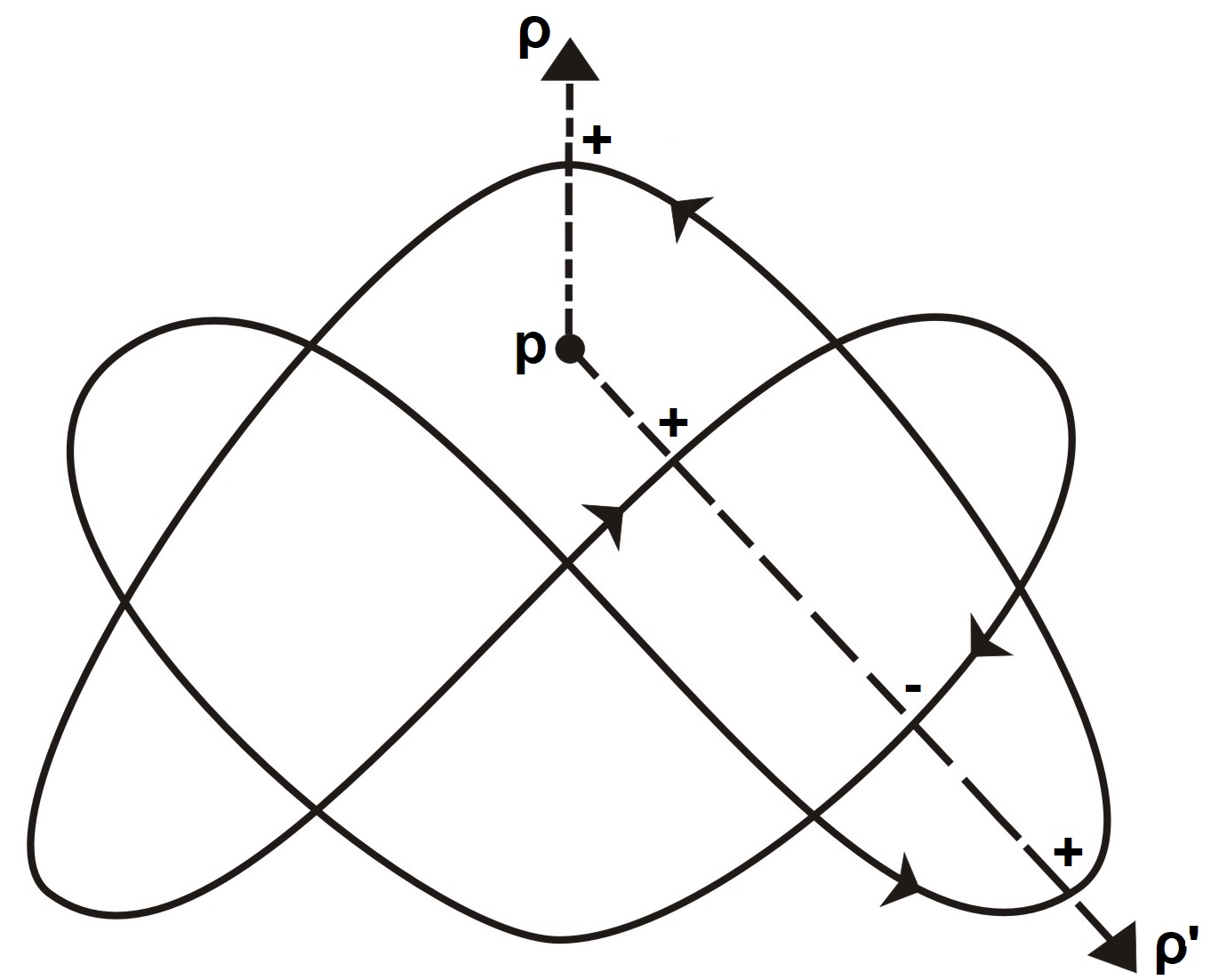}
\caption{$wind(\gamma,p)=1$.}
\label{ExampleRay}
\end{figure}

\section{The rotation number}

 We say that a closed curve $\gamma $ is regular if its derivative\ $\gamma ^{\prime }$\ exists, it is continuous, $\gamma ^{\prime }(t)\neq
0$ for all $t\in \lbrack 0,1]$ and $\gamma ^{\prime }(0)=\gamma ^{\prime
}(1) $. The rotation number $w(\gamma)$ of a regular closed plane curve $\gamma $ is the number of complete rotations that a tangent vector to the curve makes as it goes around the curve. Now we are going to give a formal definition.

Let $\gamma :[0,1]\longrightarrow
\mathbb{R}
^{2}$ be a regular closed curve. We define the function

$$\overline{\gamma }:[0,1]\longrightarrow{S^{1}}$$
$$\overline{\gamma }(t)=\frac{\gamma^{\prime }(t)}{\left\vert\gamma^{\prime }(t)\right\vert }$$

Let us observe that $\overline{\gamma }(0)=\overline{\gamma }(1)$. The rotation number $w(\gamma)$ of $\gamma $ is defined as the degree of the map $\overline{%
\gamma }$, this is
\[
w(\gamma ):=degree(\overline{\gamma })
\]

\vspace{1pt}Let $x\in Im(\gamma )$ be a point (we will call it base point) that is not a crossing point. For a crossing point $c$, this determines an ordering of two outgoing branches of $\gamma $ in $c$ (the branch that passes through the crossing first and the one that passes second). If when we cross the crossing $c$ through the first branch the second branch passes from left to right, we will say that $c$ is a positive crossing and we will write $\varepsilon _{c}(x)=1$. Otherwise we will say that $c$ is negative and write $\varepsilon _{c}(x)=-1$. Also, we define $ind_{\gamma }(x)$ as the average between the values of the two winding number of the regions that are on the sides of $x$. Let us observe that $ind_{\gamma }(x)\in \frac{1}{2}%
\mathbb{Z}
$.

\begin{theorem}
 
(Whitney \cite{whitney1937regular}) Let $x\in Im(\gamma )$ be a point that is not a crossing point. Then the rotation number $w(\gamma)$ satisfies  
$$w(\gamma)=\sum_{c}^{}\varepsilon_c(x)+2ind_{\gamma}(x).$$
 
\end{theorem}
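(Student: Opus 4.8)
The plan is to use that $w(\gamma)$ does not depend on the choice of base point, and to establish the identity in two stages. Write $R(x):=\sum_{c}\varepsilon_{c}(x)+2\,ind_{\gamma}(x)$ for the right-hand side. Stage one: show that $R(x)$ has the same value for every admissible base point $x$. Stage two: show that $R(x_{0})=w(\gamma)$ for one conveniently chosen $x_{0}$.

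For stage one, first observe that $R$ is constant while $x$ ranges over one of the open arcs into which the crossing points cut $\gamma$: moving $x$ inside such an arc changes neither the (ordered) pair of regions flanking $x$, so $ind_{\gamma}(x)$ is unchanged, nor, for any crossing $c$, which of its two branches is met first when $\gamma$ is traversed from $x$, so every $\varepsilon_{c}(x)$ is unchanged. Hence it suffices to check that $R$ does not jump as $x$ is pushed through a single crossing point $c$ along one of the two branches meeting there. When that happens, two things occur simultaneously. First, the ``first/second'' labelling of the two branches of $c$ gets interchanged, so $\varepsilon_{c}(x)$ changes sign — a jump of $\pm 2$ — while every other $\varepsilon_{c'}(x)$ is unaffected. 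Second, the ordered pair of regions on the two sides of $x$ jumps from one pair of the four local quadrants at $c$ to the complementary pair, so $ind_{\gamma}(x)$ changes by $\pm 1$; using the numbering Lemma to pin down the four local winding numbers (they are $n,\,n+1,\,n+1,\,n+2$ around the quadrants, for some $n$), a short case check shows the signs are arranged so that $2\,\Delta ind_{\gamma}(x)=-\Delta\varepsilon_{c}(x)$, whence $\Delta R=0$. Since $\gamma$ is a circle, the arcs are joined up through the crossings, so $R$ is globally constant. I expect this sign bookkeeping — keeping the orientation convention for $\varepsilon_{c}$ consistent with the left/right rule for winding numbers through all the cases — to be the main obstacle; it is also the part genuinely specific to the general-base-point statement.

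For stage two, choose $x_{0}=\gamma(t_{0})$ to be a point of the image at which the second coordinate attains its minimum and which is not a crossing (perturb slightly if necessary), and reparametrize so that $\gamma(0)=\gamma(1)=x_{0}$; after a reflection of the plane if needed, assume $\gamma'(0)$ points in the positive $x$-direction (a reflection reverses the signs of $w(\gamma)$, of each $\varepsilon_{c}$, and of $ind_{\gamma}$, so the identity is reflection-invariant). Then the whole curve lies weakly above $x_{0}$, so the region just below $x_{0}$ is the unbounded one with winding number $0$, and by the numbering Lemma the region just above has winding number $1$; hence $ind_{\gamma}(x_{0})=\tfrac12$ and $R(x_{0})=\sum_{c}\varepsilon_{c}(x_{0})+1$. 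It remains to prove the classical Whitney identity $\sum_{c}\varepsilon_{c}(x_{0})=w(\gamma)-1$, and for this I would use the secant map: on the triangle $T=\{(s,t):0\le s\le t\le 1\}$ with the finitely many points $(t_{1}^{c},t_{2}^{c})$ removed, set $\Gamma(s,t)=\dfrac{\gamma(t)-\gamma(s)}{|\gamma(t)-\gamma(s)|}$, extended continuously to the diagonal by $\Gamma(t,t)=\overline{\gamma}(t)$ and to the corner $(0,1)$ by $\Gamma(0,1)=-\overline{\gamma}(0)$. Degree theory gives that the winding of $\Gamma$ around $\partial T$ equals the sum over the crossings of the local degree of $\Gamma$ at $(t_{1}^{c},t_{2}^{c})$. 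On the diagonal edge $\Gamma$ restricts to $\overline{\gamma}$ and contributes $w(\gamma)$; on each of the other two edges $\Gamma$ is $\pm$ the direction from $x_{0}$ to the moving point of the curve, which stays in a closed half-circle (the curve lies above $x_{0}$) and sweeps net angle $\pm\pi$, contributing $-\tfrac12$ each; so the boundary winding is $w(\gamma)-1$. Near a crossing, $\gamma(t)-\gamma(s)$ is to first order the linear map $(s,t)\mapsto\gamma'(t_{2}^{c})(t-t_{2}^{c})-\gamma'(t_{1}^{c})(s-t_{1}^{c})$, so the local degree is the sign of $\det[-\gamma'(t_{1}^{c}),\,\gamma'(t_{2}^{c})]$, which, unwinding the conventions, is exactly $\varepsilon_{c}(x_{0})$. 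Therefore $w(\gamma)-1=\sum_{c}\varepsilon_{c}(x_{0})$, so $R(x_{0})=w(\gamma)$, and combined with stage one this proves the theorem for every base point. The delicate points of this stage are verifying that $\Gamma$ extends continuously up to $\partial T$ (including at the corners) and matching the sign of the local degree to the sign convention for $\varepsilon_{c}$.
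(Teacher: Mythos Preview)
The paper does not contain a proof of this theorem: it is stated as a classical result of Whitney, with a citation to \cite{whitney1937regular}, and is followed only by a worked example. So there is no ``paper's own proof'' to compare your proposal against.

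That said, your sketch is a faithful outline of Whitney's original argument. Stage one (base-point independence of $R$) is the standard bookkeeping: only $\varepsilon_{c}$ for the crossing being passed flips sign, and the four local winding numbers at a crossing are $A-1,\,A,\,A,\,A+1$, so $ind_{\gamma}(x)$ jumps by exactly $\pm 1$ along either branch; the sign compatibility you flag as the main obstacle does come out correctly once the left/right convention for winding numbers is matched to the convention for $\varepsilon_{c}$. Stage two is precisely Whitney's secant-map argument on the triangle $T$: the continuous extension of $\Gamma$ to the diagonal and to the corners $(0,0)$, $(1,1)$, $(0,1)$ is the point that requires care (at $(0,1)$ one needs that $\gamma$ is $C^{1}$ at the base point and that the curve lies in a half-plane there), and the identification of the local degree at $(t_{1}^{c},t_{2}^{c})$ with $\varepsilon_{c}(x_{0})$ is exactly the linearisation you wrote. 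Both stages are correct as sketched; if you want to turn this into a full proof you should write out the sign check in stage one explicitly and justify the continuous extension of $\Gamma$ at the three corners of $T$.
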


\begin{ex}

On the following curve we choose a base point $x$ and calculate the signs of the crossing points. For this, we start from $x$ in the direction of the orientation of the curve until we reach the first crossing point and we see that the second branch crosses us from right to left, therefore the crossing point is negative. We continue traveling the curve and when we arrive (for the first time) at the second crossing point we see that the second branch crosses us from left to right, therefore the crossing point is positive. Also, the regions on the sides of $x$ have winding numbers equal to 0 and 1, therefore $ind_{\gamma}(x)={1}/{2}$. Then $w(\gamma)=\sum_{c}^{}\varepsilon_c(x)+2ind_{\gamma}(x)=1$.

\begin{figure}[ht]
\centering
\includegraphics[scale=0.5]{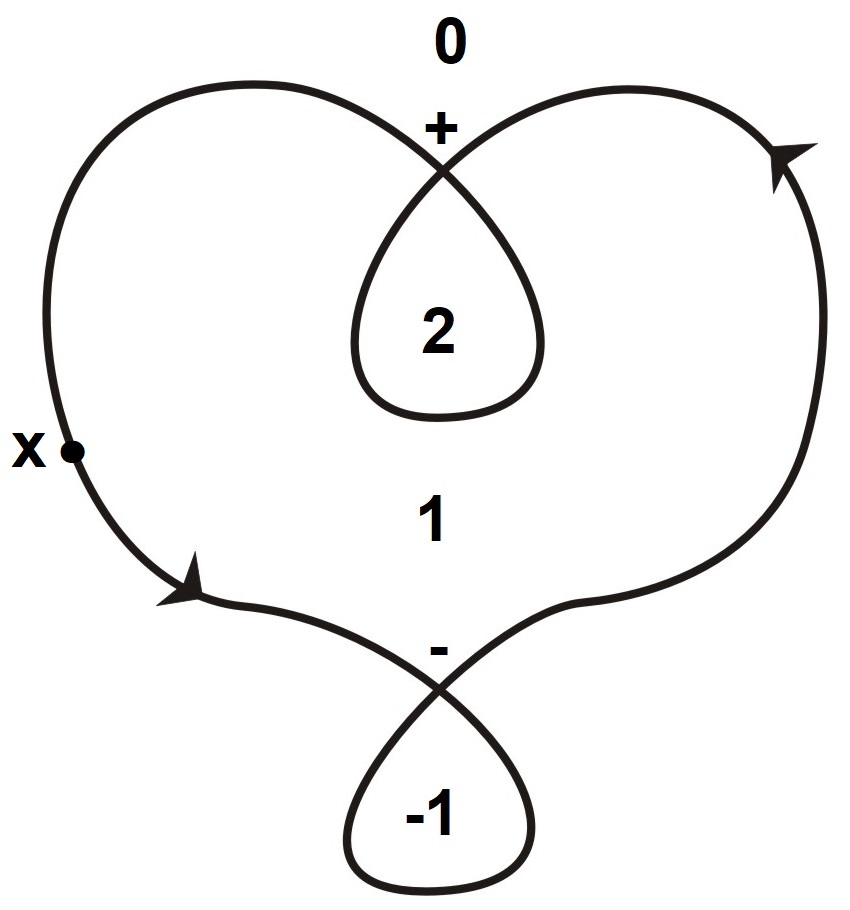}
\end{figure}

\end{ex}

\section{Regular homotopy}

A regular homotopy is a function $h:[0,1]^{2}\longrightarrow 
\mathbb{R}
^{2}$ such that for all $s$, the function $t\longmapsto h(s,t)$ is a regular closed curve, and the partial derivative $\partial h/\partial t$ is a free homotopy between loops in $%
\mathbb{R}^{2}
-\{0\}$. Two regular closed curves $\gamma _{1}$ and $\gamma _{2}$ are regularly homotopic if there is a regular homotopy $h$ such that $h(0,$%
\textperiodcentered $)=\gamma _{1}$ and $h(1,$\textperiodcentered $%
)=\gamma _{2}$.

\begin{theorem}
 
 (Whitney - Graustein \cite{whitney1937regular}) Two regular closed curves in $%
\mathbb{R}
^{2}$ are regularly homotopic if and only if their rotation numbers are equal.
 
\end{theorem}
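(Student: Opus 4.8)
The plan is to prove the two implications separately; the forward one is essentially formal, and the converse carries all the content. For the forward implication, recall that $w(\gamma)=\deg(\overline{\gamma})$. If $h$ is a regular homotopy from $\gamma_1$ to $\gamma_2$, then by the very definition of regular homotopy $(s,t)\mapsto \partial h/\partial t$ is a free homotopy of loops in $\mathbb{R}^{2}\setminus\{0\}$, so after normalising, $(s,t)\mapsto (\partial h/\partial t)/|\partial h/\partial t|$ is a homotopy of loops $S^{1}\to S^{1}$ from $\overline{\gamma_1}$ to $\overline{\gamma_2}$. Since the degree of a map $S^{1}\to S^{1}$ is a homotopy invariant, $w(\gamma_1)=w(\gamma_2)$.

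For the converse, assume $w(\gamma_1)=w(\gamma_2)=w$. I would first normalise: reparametrising to constant speed, translating so that $\gamma_j(0)=0$, rotating so that the initial velocity points along the positive real axis, and rescaling to length $2\pi$ are each realised by an explicit regular homotopy, so (regular homotopy being an equivalence relation) we may assume $\gamma_j:[0,1]\to\mathbb{C}$ with $|\gamma_j'|\equiv 2\pi$, $\gamma_j(0)=0$ and $\gamma_j'(0)=\gamma_j'(1)=2\pi$. Then $\overline{\gamma_j}$ is a loop in $S^{1}$ based at $1$ of degree $w$, so $\overline{\gamma_j}(t)=e^{i\theta_j(t)}$ for a continuous $\theta_j$ with $\theta_j(0)=0$ and $\theta_j(1)=2\pi w$; moreover $\int_0^1\overline{\gamma_j}(\tau)\,d\tau=\tfrac1{2\pi}(\gamma_j(1)-\gamma_j(0))=0$ because $\gamma_j$ is closed. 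The task is now to deform $\overline{\gamma_1}$ into $\overline{\gamma_2}$ while keeping these as tangent-direction fields of closed curves.

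For this I would put $\theta_s=(1-s)\theta_1+s\theta_2$, $u_s(t)=e^{i\theta_s(t)}$, $c(s)=\int_0^1 u_s(\tau)\,d\tau$, and define
$$h(s,t)=2\pi\Bigl(\int_0^t u_s(\tau)\,d\tau-t\,c(s)\Bigr).$$
The drift term $-t\,c(s)$ is exactly what forces $h(s,0)=h(s,1)=0$, so each $h(s,\cdot)$ is a closed curve through the origin. One then checks routinely that for each $s$ the derivative $\partial h/\partial t=2\pi(u_s(t)-c(s))$ agrees at $t=0$ and $t=1$, and that $h(0,\cdot)=\gamma_1$, $h(1,\cdot)=\gamma_2$ (here $c=0$, since $u_0=\overline{\gamma_1}$ and $u_1=\overline{\gamma_2}$ integrate to zero). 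The one substantive point is that $h$ is regular: $|c(s)|\le\int_0^1|u_s|=1$, with equality only if $u_s$ is constant, so $\partial h/\partial t$ vanishes nowhere as long as every $u_s$ is nonconstant.

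The nonconstancy of the $u_s$ is the main obstacle. For $w\neq0$ it is free, since a constant loop has degree $0\neq w$. For $w=0$ the straight-line homotopy $\theta_s$ really can pass through $\theta\equiv0$ — for instance when $\gamma_2$ is the mirror image of $\gamma_1$, so $\theta_2=-\theta_1$ and $\theta_{1/2}\equiv0$ — and then one must first perturb, replacing $\theta_s$ by something like $\theta_s+\phi(s)\psi(t)$ with $\psi$ a fixed function not proportional to $\theta_1$ and $\phi$ a bump vanishing at $s=0,1$, arranged so that no $u_s$ is the constant loop; this is possible because that loop is a single point of the connected space of based degree-zero loops. This is also the conceptual reason the homotopy-of-Gauss-maps argument does not by itself close the proof: throughout the deformation the interpolating maps must stay honest tangent directions of closed curves, which is exactly what a constant loop is not.
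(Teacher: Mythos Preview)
The paper does not prove this theorem; it is quoted as a classical background result with a citation to Whitney's 1937 paper, and no argument is given. There is therefore no ``paper's own proof'' to compare your attempt against.

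For what it is worth, your argument is essentially Whitney's original one and is correct. The forward direction via homotopy invariance of degree is fine. For the converse, the linear interpolation of angle functions together with the drift term $-t\,c(s)$ to force closure is the standard construction, and your regularity check --- that $|c(s)|\le 1$ with equality only when $u_s$ is constant, so $\partial h/\partial t=2\pi(u_s-c(s))$ never vanishes --- is right. The $w=0$ case is where your write-up is thinnest: the idea that the constant loop is a single point in an infinite-dimensional path-connected space of angle functions, hence avoidable by a small perturbation, is correct, but in a fully detailed proof one would either make the perturbation explicit (choose $\psi$ with $\psi(0)=\psi(1)=0$ linearly independent of $\theta_2-\theta_1$, so that in the resulting $2$-plane of angle functions the origin is a single point that a path from $\theta_1$ to $\theta_2$ can avoid) or, as Whitney does, route both curves through a fixed figure-eight model.
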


Given two closed generic curves $\gamma _{1}$ and $\gamma _{2}$, it is always possible to go from $\gamma _{1}$ to $\gamma _{2}$ through a finite sequence of elementary moves $M_{1}$, $M_{2}$ and $M_{3}$ (see Figure \ref{MovesM}). This elementary moves are \textquotedblleft shadows\textquotedblright\ of the classical Reidemeister moves used to manipulate knot and link diagrams in \cite{reidemeister1927elementare}. Furthermore, it is known that $\gamma _{1}$ and $\gamma _{2}$ will be regularly homotopic if and only if there exists a finite sequence of elementary moves $M_{2}$ and $M_{3}$ that connect $\gamma _{1}$ with $\gamma _{2}$.

\begin{figure}[ht]
\centering
\includegraphics[scale=0.4]{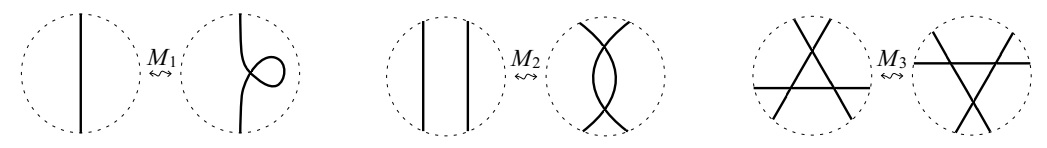}
\caption{The elementary moves $M_1$, $M_2$ and $M_3$.}
\label{MovesM}
\end{figure}

Similar things can be said for oriented closed generic curves. Given two oriented closed generic curves $\gamma _{1}$ and $\gamma _{2}$, it is always possible to go from $\gamma _{1}$ to $\gamma _{2}$ through a finite sequence of oriented elementary moves $M_{1a}$%
, $M_{1b}$, $M_{2b}$ and $M_{3a}$ (see Figure \ref{MovesOriented}). The fact that these four oriented moves is sufficient can be seen in the article \cite{polyak2009minimal} of Polyak. In this article Polyak shows that oriented Reidemeister moves $\Omega_{1a}$, $\Omega_{1b}$, $\Omega_{2a}$ and $\Omega_{3a}$ (moves of knot diagrams similar to $M_{1a}$, $M_{1b}$, $M_{2a}$ and $M_{3a}$ but indicating at each crossing the branch that passes below and the one that passes above) are enough to generate all oriented Reidemeister moves.

\begin{figure}[ht]
\centering
\includegraphics[scale=0.5]{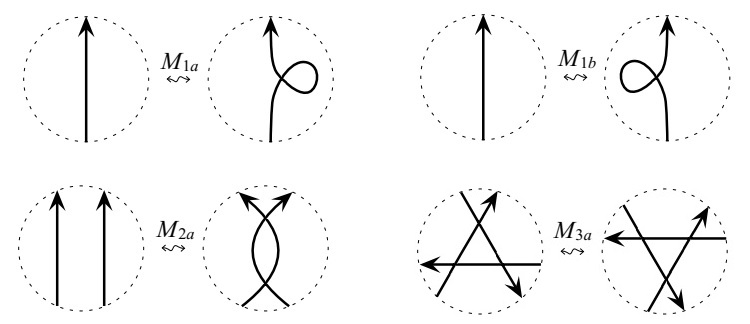}
\caption{The oriented elementary moves $M_{1a}$, $M_{1b}$, $M_{2a}$, and $M_{3a}$.}
\label{MovesOriented}
\end{figure}

\section{A new formula for the rotation number}

Now we are going to associate an integer to each crossing point of the curve. It is easy to verify that at any crossing point $c$ there are always two opposite corners with the same winding number $A$ and two opposite corners with winding number $A-1$ and $A+1$ (see Figure \ref{Crossing}). Then we associate the interger $A$ as the associated number of the crossing point $c$ and we write $wind(c)=A$.

\begin{figure}[ht]
\centering
\includegraphics[scale=0.5]{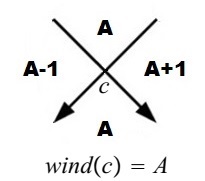}
\caption{The winding number of a crossing point.}
\label{Crossing}
\end{figure}

Combinatorially, a generic closed curve can be thought of as a 4-regular graph. If the number of vertices of this graph (the crossing points) is $n$, then the number of edges is $2n$. Also, follows from Euler's formula in the sphere ($\#faces-\#edges+\#vertices=2$) that the number of faces (regions) is $n+2$.

\begin{theorem}\label{mio}
 Let $\gamma $ be a closed oriented normal curve in the plane. Let $%
r_{1},r_{2},...,r_{n+2}$ be the regions of $\gamma $ and let $%
c_{1},c_{2},...,c_{n}$ be the crossing points of $\gamma $. Then the rotation number $w(\gamma)$ satisfies
$$w(\gamma)=\sum_{i=1}^{n+2}wind(r_i)-\sum_{i=1}^{n}wind(c_i).$$
\end{theorem}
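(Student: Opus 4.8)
The plan is to verify the formula on a single simple curve and then show both sides change in the same way under the oriented elementary moves $M_{1a}$, $M_{1b}$, $M_{2a}$, $M_{3a}$, which by the discussion in Section 4 connect any two oriented generic curves. Since any generic closed curve is obtained from a simple closed curve (with, say, $w(\gamma)=1$) by a finite sequence of such moves, invariance of the quantity $R(\gamma):=\sum_i wind(r_i)-\sum_i wind(c_i)$ under $M_{2a}$ and $M_{3a}$ together with the correct change under $M_{1a}$, $M_{1b}$ — matching the known behaviour of $w$, which increases or decreases by $1$ under an $M_1$ move and is unchanged by $M_2$, $M_3$ — will give $R(\gamma)=w(\gamma)$ for all $\gamma$.

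First I would establish the base case: for a simple positively oriented circle there are two regions, with winding numbers $0$ (outside) and $1$ (inside), no crossings, so $R=0+1-0=1=w(\gamma)$; the negatively oriented circle gives $R=0+(-1)-0=-1$. Next, for each oriented move I would do the local bookkeeping. An $M_1$ move creates (or destroys) one crossing $c$ and splits one region into two: the crossing has some winding number $A$, and of the two new regions one inherits the old winding number while the other differs by $\pm1$ from it, the sign depending on the orientation/handedness of the loop (this is exactly the $M_{1a}$ vs.\ $M_{1b}$ distinction). Thus $\sum wind(r_i)$ changes by (old value) $\pm 1$ versus old value, i.e.\ by $\pm 1$ relative to removing the old region and the $-\sum wind(c_i)$ term changes by $\mp A$... here one must be careful: the new region's winding number is $A$ or $A\pm1$ and the crossing value is exactly the common value $A$ of the two ``through'' corners, so the net change in $R$ is precisely $+1$ for $M_{1a}$-type loops and $-1$ for $M_{1b}$-type loops, matching the change in $w(\gamma)$. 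For an $M_2$ move two crossings $c,c'$ and two regions are created (a bigon plus one more region from the pushed-across strand); I would check that the two new crossings have winding numbers $A$ and $A+1$ for suitable labelling while the two new regions have winding numbers that also sum in a way that cancels, giving $\Delta R=0$. For $M_3$ the number of crossings and regions is unchanged; only the winding numbers of the small central triangle and the crossings shift, and a direct local computation shows $\sum wind(r_i)-\sum wind(c_i)$ is unaffected, so $\Delta R=0$.

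The main obstacle I anticipate is the $M_1$ computation, specifically pinning down the sign conventions so that the $\pm 1$ produced by $R$ agrees with the $\pm 1$ produced by $w$ for the \emph{same} oriented move rather than being off by a global sign; this requires care with the convention in Lemma 1 (left side is one greater than the right) and with the definition $wind(c)=A$ as the repeated corner value. A clean way to handle all three move-types uniformly is to note that the formula is ``local'': when an elementary move is performed inside a disk $D$ whose complement meets $\gamma$ in arcs, every region and crossing outside a neighbourhood of $D$ keeps its winding number, and all regions affected by the move have winding numbers differing from a single reference integer $A$ by small amounts (because Lemma 1 forces winding numbers of adjacent regions to differ by $1$), so the whole verification reduces to a finite number of bounded integer computations, one picture per oriented move. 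Carrying out those four pictures, and confirming the base case, completes the proof.
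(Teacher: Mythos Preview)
Your proposal is correct and follows essentially the same approach as the paper: verify the formula on a Jordan curve, then show that $w(\gamma)-R(\gamma)$ is unchanged by each of the four oriented elementary moves $M_{1a}$, $M_{1b}$, $M_{2a}$, $M_{3a}$ via a local winding-number computation inside a changing disc. The paper's proof is organized identically; the only extra care it takes that you should not overlook is the case (relevant for $M_{3a}$) where two locally distinct sub-regions inside the disc happen to belong to the same global region, so that their winding numbers must not be double-counted.
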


\begin{proof}

Given a closed oriented normal curve $\gamma $ we define
$$d(\gamma):=\sum_{i=1}^{n+2}wind(r_i)-\sum_{i=1}^{n}wind(c_i).$$

We want to prove that $w(\gamma )=d(\gamma )$. Let us observe that for a Jordan curve (that is, a simple closed curve) the equality $w(\gamma )=d(\gamma )$ holds (see Figure \ref{Jordan}).

\begin{figure}[ht]
\centering
\includegraphics[scale=1]{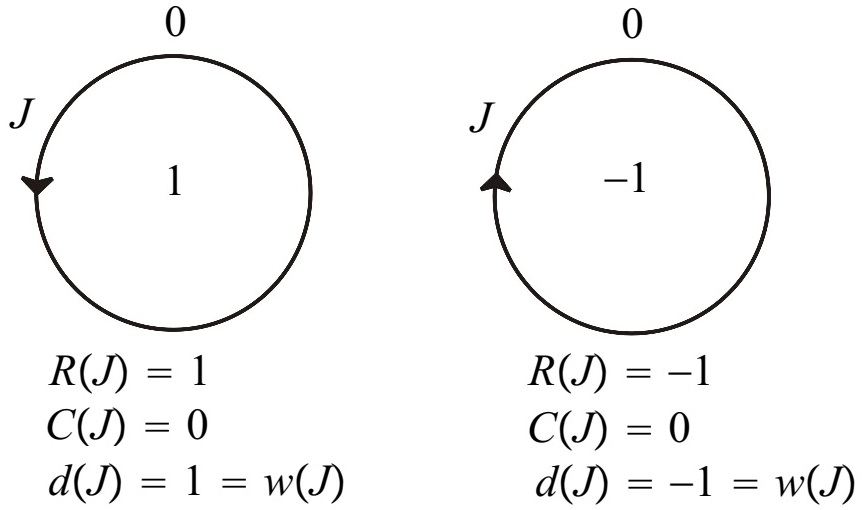}
\caption{The rotation number coincides with $d(\gamma )$ on the Jordan curve.}
\label{Jordan}
\end{figure}

We will use the following notation:

$$R(\gamma):=\sum_{i=1}^{n+2}wind(r_i)\:\:\:\:\:\:and\:\:\:\:\:\:C(\gamma):=\sum_{i=1}^{n}wind(c_i)$$

So that $d(\gamma)=R(\gamma)-C(\gamma)$. Furthermore, we know that given two curves $\gamma _{1}$ and $\gamma _{2}$ it is always possible to transform one curve into the other by performing a finite sequence of moves $M_{1a}$, $M_{1b}$, $M_{2a}$ and $M_{3a}$. Now we will see that the value of $w(\gamma )-d(\gamma )$ remains constant when we make these moves.

Let $D$ be a changing disc within which we are going to make a elementary move. We define $R_{ex}(\gamma)$ as the sum of the winding numbers of the regions that have empty intersection with $D$, and define $R_{in}(\gamma)$ as the sum of the winding numbers of the regions that have nonempty intersection with $D$. Trivially we have that $R(\gamma)=R_{ex}(\gamma)+R_{in}(\gamma)$. Similarly for the crossing points we define $C_{ex}(\gamma)$ as the sum of the winding numbers of the crossing points that do not belong to the changing disc $D$ and we define $C_{in}(\gamma)$ as the sum of the winding numbers of the crossing points that belong to the changing disc $D$. Trivially we have that $C(\gamma)=C_{ex}(\gamma)+C_{in}(\gamma)$.

If $r$ is a region that is not entirely contained in $D$, then $wind(r)$ is preserved after the elementary move is made. To understand this think of the definition of $wind(r)$ using a ray emerging from any point of $r$ (choose a point outside disk $D$) and any direction (choose a direction so that the ray does not pass through disk $D$) that cuts across $\gamma$ (see Figure \ref{ExampleRay}). The above statement implies that both $R_{ex}(\gamma)$ and $C_{ex}(\gamma)$ are preserved after the move, and therefore $d_{ex}(\gamma):=R_{ex}(\gamma) - C_{ex}(\gamma)$ is preserved. Now let's see what happens to $d_{in}(\gamma):=R_{in}(\gamma) - C_{in}(\gamma)$ when we make a elementary move. For that we will analyze each elementary move separately:

In the elementary move $M_{1a}$ (viewed from left to right) the value of $d_{in}(\gamma)$ decreases by 1 (see Figure \ref{Move1}). Then $d(\gamma)=d_{ex}(\gamma)+d_{in}(\gamma)$ decreases by 1. Furthermore, since $w(\gamma)$ also decreases by 1 we have that $w(\gamma )-d(\gamma )$ remains constant. In the elementary move $M_{1b}$ (viewed from left to right) the value of $d_{in}(\gamma)$ increase by 1 (see Figure \ref{Move1}). Then $d(\gamma)=d_{ex}(\gamma)+d_{in}(\gamma)$ increase by 1. Furthermore, since $w(\gamma)$ also increases by 1 we have that $w(\gamma )-d(\gamma )$ remains constant.

\begin{figure}[ht]
\centering
\includegraphics[scale=0.6]{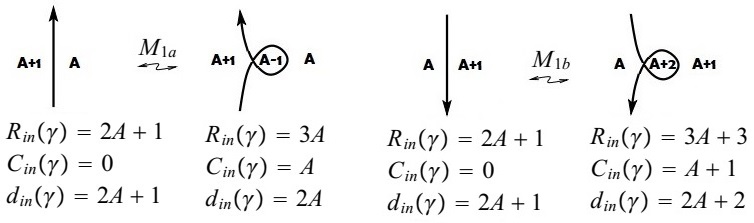}
\caption{The moves $M_{1a}$ and $M_{1b}$ do not change the value of $w(\gamma)-d(\gamma)$.}
\label{Move1}
\end{figure}

In the elementary move $M_{2a}$ el value of $d_{in}(\gamma)$ remain constant (see Figure \ref{Move2and3}). Then $d(\gamma)=d_{ex}(\gamma)+d_{in}(\gamma)$ remain constant. Furthermore, since $w(\gamma)$ also remain constant we have that $w(\gamma )-d(\gamma )$ remains constant.
In the elementary move $M_{3a}$ el value of $d_{in}(\gamma)$ remain constant (see Figure \ref{Move2and3}). Then $d(\gamma)=d_{ex}(\gamma)+d_{in}(\gamma)$ remain constant. Furthermore, since $w(\gamma)$ also remain constant we have that $w(\gamma )-d(\gamma )$ remains constant.

\begin{figure}[ht]
\centering
\includegraphics[scale=0.6]{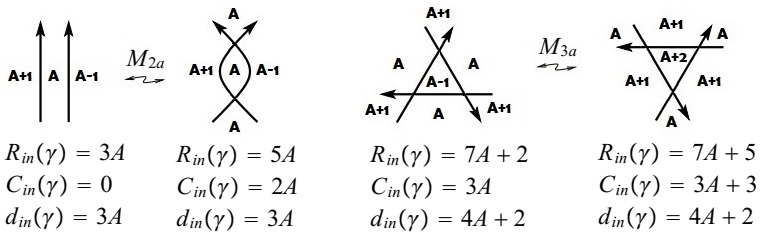}
\caption{The moves $M_{2a}$ and $M_{3a}$ do not change the value of $d_{in}(\gamma)$.}
\label{Move2and3}
\end{figure}

In Figures \ref{Move1} and \ref{Move2and3} we have assumed that all the sub-regions involved (pieces of regions within the changing disc $D$) correspond to different regions. But we must consider the cases in which there are sub-regions that are part of the same region. In the elementary moves $M_{1a}$, $M_{1b}$ and on the left side of the elementary move $M_{2a}$ this is impossible as there are not two sub-regions with the same winding number. On the right side of the elementary move $M_{2a}$ this can occurs, but if this happens it would force our curve to split in two. In the elementary move $M_{3a}$ the sub-regions 1, 3 and 5 (see Figure \ref{Sub-regions}) could be part of the same region (the same is the case with sub-regions 2, 4 and 6). Now suppose that in the elementary move $M_{3a}$ on the left side we have that sub-regions 1 and 3 are part of the same region.

\begin{figure}[ht]
\centering
\includegraphics[scale=0.6]{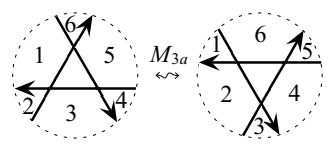}
\caption{Numbered sub-regions for the move $M_{3a}$.}
\label{Sub-regions}
\end{figure}

So on the right side, the same should happen with sub-regions 1 and 3. So to calculate the value of $d_{in}(\gamma)$ we only have to subtract $wind$(sub-region 1) from both sides of the move with respect to the calculation done in Figure \ref{Move2and3}. Therefore the value of $d_{in}(\gamma)$ remains constant in the move (see Figure \ref{SameRegion}). The same occurs if we have three sub-regions in the same region.

\begin{figure}[ht]
\centering
\includegraphics[scale=0.6]{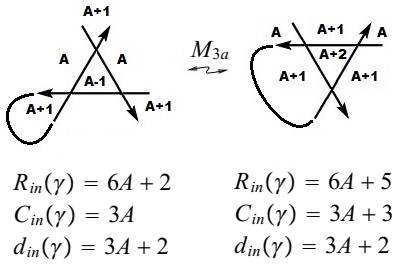}
\caption{Two pieces of regions belong to the same region.}
\label{SameRegion}
\end{figure}

We have proven that when we make any of the elementary moves $M_{1a}$, $M_{1b}$, $M_{2a}$ and $M_{3a}$ the value of $W(\gamma)-d(\gamma)$ remains constant.

Now we transform our curve $\gamma $ into a Jordan curve $J$ by a finite sequence of moves $M_{1a}$, $M_{1b}$, $M_{2a}$ and $M_{3a}$. Since in the Jordan curve holds $w(J)-d(J)=0$, then also in our curve it is worth that $%
w(\gamma )-d(\gamma )=0$. Then $w(\gamma )=d(\gamma )$.

\end{proof}

\begin{ex}
On the following curve we choose a base point $x$ and calculate the signs of the crossing points. Five are positive and three are negative (relative to base point $x$). Moreover, we have that $ind_{\gamma}(x)=1/2$, so by Whitney's theorem we have that $w(\gamma)=\sum_{c}^{}\varepsilon_c(x)+2ind_{\gamma}(x)=3$.
On the other hand, the sum of the winding numbers of the regions is equal to 11 and the sum of the winding numbers of the crossing points is equal to 8 (all are equal to 1). Thus, $d(\gamma)=3$. 

\begin{figure}[ht]
\centering
\includegraphics[scale=0.8]{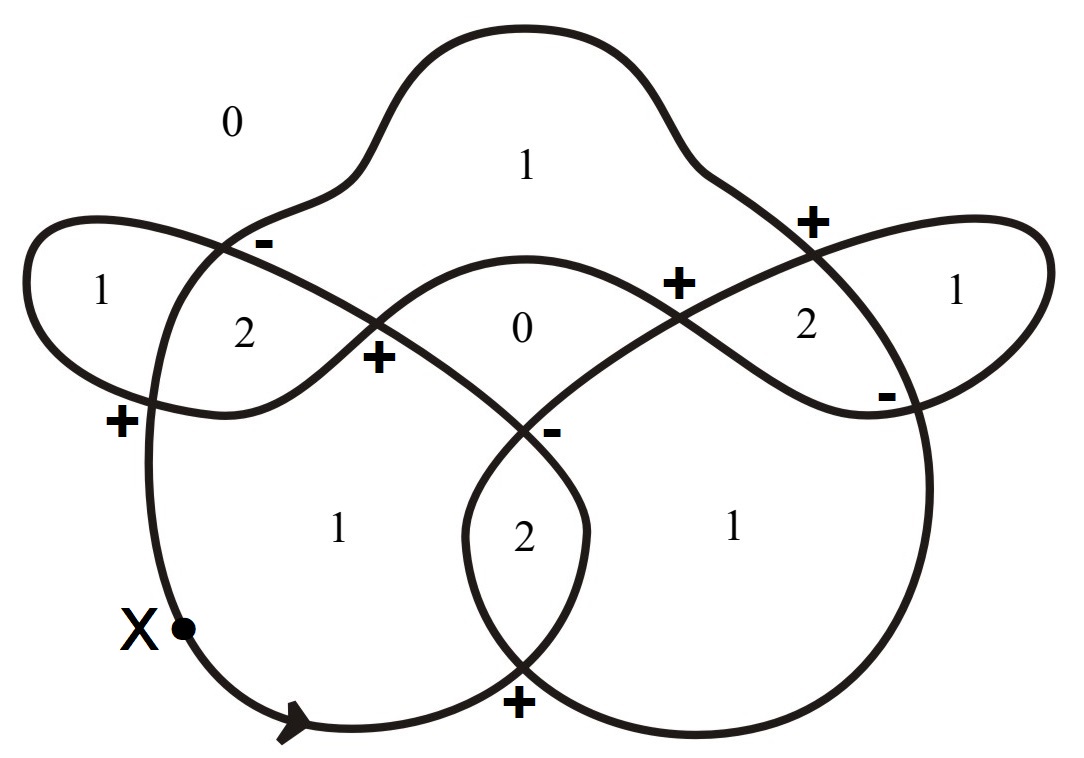}
\end{figure}

\end{ex}

\printbibliography

$$\mbox{Universidad Nacional de Mar del Plata}$$
$$\mbox{CEMIM}$$
$$\mbox{damianwesen@hotmail.com}$$

\end{document}